\numberwithin{equation}{section}
\newtheorem{maintheorem}{Theorem}
\newtheorem{theorem}{Theorem}[section]
\newtheorem*{theorem*}{Theorem}
\newtheorem{lemma}[theorem]{Lemma}
\newtheorem{claim}[theorem]{Claim}
\newtheorem{corollary}[theorem]{Corollary}
\theoremstyle{definition}{

}
\theoremstyle{remark}{

\newtheorem*{remark*}{Remark}

}
\newcommand{\R}{\mathbb R}
\newcommand{\N}{\mathbb N}
\newcommand{\E}{\mathbb{E}}
\renewcommand{\P}{\mathbb{P}}
\DeclareMathOperator{\var}{Var} \DeclareMathOperator{\Cov}{Cov}
\renewcommand{\epsilon}{\varepsilon}
\newcommand{\one}{\boldsymbol{1}}
\newcommand{\deq}{\stackrel{\scriptscriptstyle\triangle}{=}}
\newcommand{\tmix}{t_{{\rm mix}}}
\newcommand{\gap}{\text{\tt{gap}}}
\newcommand{\cS}{\mathcal{S}}
\begin{document}
\title{Mixing time for the Ising model: \\ a uniform lower bound for all graphs}
\author{Jian Ding and Yuval Peres}

\address{Jian Ding\hfill\break
Department of Statistics\\
UC Berkeley\\
Berkeley, CA 94720, USA.}
\email{jding@stat.berkeley.edu}
\urladdr{}

\address{Yuval Peres\hfill\break
Microsoft Research\\
One Microsoft Way\\
Redmond, WA 98052-6399, USA.}
\email{peres@microsoft.com}
\urladdr{}

\begin{abstract}\thanks{Recently we found a simple proof which gives a $n\log n/2$ lower bound. See appendix.}
Consider Glauber dynamics for the Ising model on a graph of $n$ vertices. Hayes and Sinclair showed that the mixing time for this dynamics is at least $n\log n/f(\Delta)$, where $\Delta$ is the maximum degree and $f(\Delta) = \Theta(\Delta \log^2 \Delta)$. Their result applies to more general spin systems, and in that generality, they showed that some dependence on $\Delta$ is necessary. In this paper, we focus on the ferromagnetic Ising model and prove that
the mixing time of Glauber dynamics on any $n$-vertex graph is at least $(1/4+o(1))n \log n$.

\end{abstract}
\maketitle
\section{Introduction}\label{sec:1}

Consider a finite graph $G = (V, E)$ and a  finite alphabet $Q$. A general {\em spin system} on $G$ is a probability measure $\mu$ on $Q^V$; well studied examples in computer science and statistical physics include the uniform
 measure on proper colorings and the Ising model.
Glauber (heat-bath) dynamics are often used to sample from $\mu$
(see, e.g., \cite{Sinclair, Martinelli, LPW}). In discrete-time
Glauber dynamics, at each step a vertex $v$ is chosen uniformly at
random and the label at $v$ is replaced by a new label chosen from
the $\mu$-conditional distribution given the labels on the other
vertices. This Markov chain has stationary distribution $\mu$, and
the key quantity to analyze is the mixing time $\tmix$, at which the
distribution of the chain is close in total variation to $\mu$
(precise definitions are given below).

If $|V| =n$, it takes $(1+o(1))n\log n$ steps to update all vertices (coupon collecting), and it is natural to guess that this is a lower bound for the mixing time. However, for the Ising model at infinite temperature
or equivalently, for the 2-colorings of the graph $(V,\emptyset)$, the mixing time of Glauber dynamics is asymptotic to $n\log n/2$, since these models reduce to the lazy random walk on the hypercube, first analyzed in~\cite{Aldous}. Thus mixing can occur before all sites are updated, so the coupon collecting argument does not suffice to obtain a lower bound for the mixing time. The first general bound of the right order was obtained by Hayes and Sinclair \cite{HS}, who showed that the mixing time for Glauber dynamics is at least $n\log n/f(\Delta)$, where $\Delta$ is the maximum degree and $f(\Delta) = \Theta(\Delta \log^2 \Delta)$. Their result applies for quite general spin systems, and they gave examples of spin systems $\mu$ where some dependence on $\Delta$ is necessary. After the work of \cite{HS}, it remained unclear whether a uniform lower bound of order $n \log n$, {\em that does not depend on $\Delta$\/}, holds for the most extensively studied spin systems, such as proper colorings and the Ising model.

In this paper, we focus on the ferromagnetic Ising model, and obtain a lower bound of $(1/4+o(1))n\log n$ on any graph with general (non-negative) interaction strengths.
\bigskip

\noindent{\bf Definitions.} The \emph{Ising Model} on a finite graph
$G=(V,E)$ with interaction strengths $J=\{J_{uv} \geq 0: uv\in E\}$
 is a probability measure $\mu_G$ on the configuration space $\Omega = \{\pm1\}^V$,
defined as follows. For each $\sigma \in \Omega$,
 \begin{equation} \label{gibbs}
 \mu_G(\sigma)  = \frac1{Z(J)} \exp\Big( \sum_{uv\in E}J_{uv}\sigma(u)\sigma(v) \Big)\,,
 \end{equation}
where $Z(J)$ is a normalizing constant called the {\em partition function}.
The measure $\mu_G$ is also called the {\em Gibbs measure\/} corresponding to the interaction matrix $J$.
 When there is no ambiguity regarding the base graph, we sometimes write $\mu$ for $\mu_G$.

Recall the definition of the Glauber dynamics: At each step, a
vertex is chosen uniformly at random, and its spin is updated
according to the conditional Gibbs measure given the  spins of all the
other vertices. It is easy to verify that this chain is reversible
with respect to $\mu_G$.

Next we define the \emph{mixing time}.
Let $(X_t)$ denote an aperiodic irreducible Markov chain on a finite state space $\Omega$ with transition kernel $P$
and stationary measure $\pi$. For any two distributions $\mu,\nu$ on $\Omega$, their \emph{total-variation distance} is defined to be
$$\|\mu-\nu\|_\mathrm{TV} \deq \sup_{A \subset\Omega} \left|\mu(A) - \nu(A)\right| = \tfrac{1}{2}\sum_{x\in\Omega} |\mu(x)-\nu(x)|~.$$
 For $x\in \Omega$ let $\P_x$ denotes the probability given $X_0=x$ and let
 $$\tmix^x = \min\left\{t : \| \P_x(X_t \in \cdot)- \pi\|_\mathrm{TV} \leq \frac{1}{4} \right\}$$ be
the mixing time with initial state $x$. (The choice of $1/4$ here is
by convention, and can be replaced by any constant in $(0,1/2)$,
 without affecting the $(1/4+o(1)) n \log n$ lower bound in the next theorem.)
  The {\em mixing time} $\tmix$ is then defined to be
 $\max_{x \in \Omega} \tmix^x$.

We now state our main result.
\begin{maintheorem}\label{mainthm-1}
Consider the Ising model (\ref{gibbs})
on the graph $G$ with interaction matrix $J$,  and let $\tmix^+(G,J)$ denote the mixing time of the
corresponding Glauber dynamics, started from the all-plus
configuration. Then
\[ \inf_{G,J} \tmix^+(G,J) \geq (1/4+o(1)) n\log n\,,\]
where the infimum is over all $n$-vertex graphs $G$ and all nonnegative interaction matrices $J$.
\end{maintheorem}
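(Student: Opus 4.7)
The plan is to exhibit a single event $A\subseteq\Omega$ for which $\P_{\allplus}(\sigma_t\in A)$ exceeds $\mu(A)$ by more than $1/4$ at time $t=\alpha n\log n$ (for any $\alpha<1/4$). The natural candidate is $A=\{\sigma:M(\sigma)>0\}$, where $M(\sigma)=\sum_{v}\sigma(v)$. Since $J\geq 0$ and no external field appears in~\eqref{gibbs}, the global spin flip $\sigma\mapsto-\sigma$ preserves $\mu$, so $\mu(A)\leq 1/2$. Consequently, it suffices to prove
\[
\P_{\allplus}(M(\sigma_t)>0)\ \geq\ 3/4+\delta
\]
for some $\delta>0$ and all large $n$; this yields $\|P^t(\allplus,\cdot)-\mu\|_\mathrm{TV}>1/4$, hence $\tmix^+(G,J)>t$.

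I would first establish an expected-magnetization bias of order $n^{3/4+\epsilon}$ via the monotone grand coupling of $(\sigma^+_s)$ from $\allplus$ with $(\sigma^\mu_s)$ started from $\sigma^\mu_0\sim\mu$, sharing the update vertex and uniform random variable at each step. Ferromagnetic monotonicity yields $\sigma^+_s\geq\sigma^\mu_s$ pointwise for all $s$. Let $T_v$ denote the first update time of $v$. On $\{T_v>t\}$ one has $\sigma^+_t(v)=1$ while $\sigma^\mu_t(v)=\sigma^\mu_0(v)$, which by $\pm$-symmetry of $\mu$ equals $-1$ with probability $1/2$. Combined with pointwise monotonicity, this gives $\E[\sigma^+_t(v)-\sigma^\mu_t(v)]\geq(1-1/n)^t$; summing over $v$ and using $\E M^\mu_t=0$,
\[
\E_{\allplus}M(\sigma_t)\ \geq\ n(1-1/n)^t,
\]
which for $t=\alpha n\log n$ is at least $n^{3/4+\epsilon_0}$ for some $\epsilon_0>0$.

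The second step converts this expected bias into the tail statement $\P_{\allplus}(M(\sigma_t)>0)\geq 3/4$, by a case split on $V:=\var_\mu M$. In the \emph{small-variance} regime $V\leq n^{3/2-\eta}$, Chebyshev gives $|M^\mu_t|\leq n^{3/4-\eta/3}$ with probability $1-o(1)$. Under the coupling, $M^+_t=M^\mu_t+2D_t$ where $D_t$ counts disagreement vertices, and I bound $D_t\geq D^\star_t:=\#\{v:T_v>t,\ \sigma^\mu_0(v)=-1\}$. A conditioning argument---using the negative correlation of the clock events $\{T_v>t\}$ together with the assumed small variance of $\#\{v:\sigma^\mu_0(v)=-1\}=(n-M^\mu_0)/2$---shows $\var D^\star_t=o((\E D^\star_t)^2)$, so Chebyshev gives $D^\star_t\geq n(1-1/n)^t/4$ w.h.p. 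Combining, $M^+_t\geq -n^{3/4-\eta/3}+n^{3/4+\epsilon_0}/2>0$ with probability $1-o(1)$. In the \emph{large-variance} regime $V>n^{3/2-\eta}$, the Lebowitz inequality $\E_\mu M^4\leq 3V^2$ (valid for ferromagnetic Ising with no field) combined with Paley--Zygmund forces a quantitative bimodality: $\mu$ places mass at least $3/32$ on each of $\{M\geq\sqrt V/2\}$ and $\{M\leq-\sqrt V/2\}$. Here I would argue that the chain from $\allplus$ cannot cross into $\{M\leq 0\}$ within $\alpha n\log n$ steps with probability exceeding $1/4$, by a hitting-time estimate that uses the Lebowitz-based bimodality of $\mu$ to control the stationary weight of the boundary region $\{|M|=O(1)\}$.

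The main obstacle is the large-variance regime: one must uniformly control the hitting time of $\{M\leq 0\}$ starting from $\allplus$, over all ferromagnetic $(G,J)$ whose stationary distribution has the prescribed variance. Converting Lebowitz-based quantitative bimodality into a hitting-time lower bound of order $n\log n$, without invoking assumptions on graph structure, spectral gap, or bottleneck ratio, is the delicate technical step---standard conductance bounds give only polynomial-in-$n$ lower bounds and are insufficient to reach the $n\log n$ threshold.
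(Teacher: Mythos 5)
Your small-variance regime is a genuinely different and arguably more elementary route than the paper's. The paper first uses censoring (Peres--Winkler) to pass to an accelerated dynamics, projects onto a small set $F$ of size $\sqrt n/\log n$ chosen to have tiny pairwise covariances, and then proves a contraction estimate for the projected chain via the GHS inequality; all of this is needed to control the \emph{dynamic} variance $\var_+\bigl(\sum_{v\in F}Z_t(v)\bigr)$ uniformly in $t$. Your coupling with a stationary copy neatly sidesteps this: you bound $M^+_t\geq M^\mu_t+2D^*_t$, and the second moment of $M^\mu_t$ is just $\var_\mu M$ (the stationary variance), so you never need censoring, the GHS contraction, or the subset $F$. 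Your computations (negative correlation of the clock events $\{T_v>t\}$, $\Cov(X_u,X_v)\leq p^2\Cov_\mu(\sigma(u),\sigma(v))/4$, hence $\var D^*_t\leq np/2+p^2V/4$) do close the small-variance case with room to spare.

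The gap is exactly where you flag it: the large-variance regime, and it is a genuine gap, not a technicality. Lebowitz-based bimodality of $\mu$ does not by itself give an $n\log n$ hitting-time lower bound --- conductance arguments give polynomial bounds, and you have no handle on the rate at which the boundary region is traversed. The idea you are missing is to turn ``large $\var_\mu M$'' into ``small spectral gap'' and then into ``slow mixing from $\allplus$.'' The variational principle with test function $M$ gives $\gap\leq\mathcal E(M)/\var_\mu M\leq 2/V$, so $\gap^{-1}\geq V/2$; this requires no structural assumption at all. The crucial additional fact --- specific to ferromagnetic Ising, due to Nacu --- is that the second eigenfunction of Glauber dynamics is monotone, hence attains its sup-norm at $\allplus$ or at the all-minus configuration; by $\pm$ symmetry this yields $\tmix^+\geq\log 2\cdot(\gap^{-1}-1)$ (the paper's Lemma~\ref{gap}). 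Combining, $\tmix^+\geq\log2\cdot(V/2-1)$, which exceeds $n\log n$ once $V\gtrsim n\log n$, a threshold far below your $n^{3/2-\eta}$ and comfortably compatible with your small-variance Chebyshev step (which only needs $\sqrt V\ll n^{1-\alpha}$). So your stated reluctance to ``invoke spectral gap'' is precisely what prevents the argument from closing; with the Nacu monotone-eigenfunction input and the one-line variational bound, your two-case split becomes a complete and in fact somewhat shorter proof than the paper's.
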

\noindent{\bf Remark.} Theorem \ref{mainthm-1} is sharp up to a factor of 2.
 We conjecture that $(1/4+o(1))$ in the theorem could be replaced by
$(1/2+o(1))$, i.e., the mixing time is minimized  (at least asymptotically) by taking $J\equiv 0$.
\medskip

Hayes and Sinclair \cite{HS}
constructed spin systems where the mixing time of the
Glauber dynamics has an upper bound $O(n\log n/\log \Delta)$. This,
in turn, implies that in order to establish a lower bound of order $n \log n$ for the Ising model on a general graph,
we have to employ some specific properties of the model.
In our proof of Theorem~\ref{mainthm-1}, given in the next section, we use the GHS
inequality \cite{GHS} (see also \cite{Lebowitz} and \cite{EM}) and a recent censoring inequality \cite{PW} due to Peter Winkler and the second author.

\section{Proof of Theorem~\ref{mainthm-1}}
The intuition for the proof is the following: In the case of strong
interactions, the spins are highly correlated and the
mixing should be quite slow; In the case of weak interaction
strengths, the spins should be weakly dependent and close to the
case of the graph with no edges, therefore one may extend the arguments for
the lazy walk on the hypercube.

We separate the two cases by considering the
\emph{spectral gap}. Recall that the spectral gap of a reversible discrete-time
Markov chain, denoted by $\gap$, is $1-\lambda$, where $\lambda$ is the
second largest eigenvalue of the transition kernel. The following
simple lemma gives a lower bound on  $\tmix^+$ in
terms of the spectral gap.
\begin{lemma} \label{gap}
The Glauber dynamics for the ferromagnetic Ising model (\ref{gibbs}) satisfies $\tmix^+ \geq \log 2
\cdot (\gap^{-1} - 1)$.
\end{lemma}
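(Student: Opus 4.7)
The plan is to apply the classical spectral-gap lower bound via the second eigenfunction of the Glauber transition kernel $P$, together with a symmetry-plus-monotonicity argument identifying the all-plus configuration as an arg-max of that eigenfunction.

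First I would set up the standard spectral lower bound. Let $\lambda_2 = 1 - \gap$ and let $f$ be any $\lambda_2$-eigenfunction of $P$, normalized so $\|f\|_\infty = 1$. Since $f$ is orthogonal to constants (as $\lambda_2 < 1$), $E_\pi[f] = 0$, and $P^t f = \lambda_2^t f$ yields for every $x$
\[
\lambda_2^t\,|f(x)| \;=\; |E_x[f(X_t)] - E_\pi[f]| \;\leq\; 2\,\|\P_x(X_t \in \cdot) - \pi\|_\mathrm{TV}.
\]
Taking $x_0$ with $|f(x_0)| = 1$ and evaluating at $t = \tmix^{x_0}$, the $1/4$-threshold yields $\lambda_2^{\tmix^{x_0}} \leq 1/2$, whence $\tmix^{x_0} \geq \log 2 / \log(1/\lambda_2) \geq \log 2 \cdot (\gap^{-1} - 1)$ on invoking $\log(1/\lambda_2) \leq (1-\lambda_2)/\lambda_2$ for $\lambda_2 \in (0,1]$.

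Second I would show that $\allplus$ can serve as this $x_0$. Two features of the ferromagnetic, zero-field Gibbs measure \eqref{gibbs} come into play: (i) spin-flip symmetry $\mu_G(\sigma) = \mu_G(-\sigma)$ makes $P$ commute with $\sigma \mapsto -\sigma$, so its eigenspaces split into even and odd subspaces; and (ii) the non-negativity of the interactions $J_{uv}$ makes Glauber dynamics monotone, so $P$ preserves the cone of coordinatewise-increasing functions. If the $\lambda_2$-eigenspace contains any monotone function it must contain an odd monotone one (a monotone even function on $\{\pm1\}^V$ is constant), and for such $f$ we have $f(\allplus) = \max_\sigma f(\sigma) = -f(\allminus) = \|f\|_\infty$. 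Plugging into the bound above with $x_0 = \allplus$ finishes the argument.

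The main obstacle is producing a monotone eigenfunction at $\lambda_2$. I would handle this through the variational characterization $\gap = \inf\{\mathcal{E}(g,g)/\var_\pi(g) : E_\pi[g]=0,\ g\not\equiv 0\}$ together with an FKG-based rearrangement: replacing a minimizer $g$ by its monotone rearrangement $\tilde g$ (ordered with respect to the coordinatewise order on $\{\pm1\}^V$) preserves $\var_\pi(g)$ and does not increase $\mathcal{E}$ on the FKG-positive measure~\eqref{gibbs}, so a monotone minimizer exists. Alternatively, one may sidestep the spectral side and reduce to the worst-case bound directly: by the Peres--Winkler censoring inequality combined with the monotone grand coupling, $\|\P_\allplus(X_t \in \cdot) - \pi\|_\mathrm{TV}$ dominates $\|\P_\sigma(X_t \in \cdot) - \pi\|_\mathrm{TV}$ uniformly in $\sigma$, so $\tmix = \tmix^+$ and the standard worst-case spectral bound $\tmix \geq \log 2 \cdot (\gap^{-1} - 1)$ applies unchanged to the all-plus initial state.
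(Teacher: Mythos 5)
Your overall structure matches the paper's proof: both run the classical spectral lower bound on $\tmix^{x_0}$ at a state $x_0$ where the second eigenfunction attains its sup-norm, and both use monotonicity plus spin-flip symmetry to argue that $x_0$ may be taken to be $\allplus$. The paper, however, obtains the key input---that the $\lambda_2$-eigenspace contains a monotone (increasing) function---by citing a lemma of Nacu, whereas you propose to derive it yourself, and this is where your argument has a genuine gap.

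Your ``FKG-based rearrangement'' step is not valid as written. The coordinatewise order on $\{\pm1\}^V$ is only a partial order, so there is no canonical notion of a monotone rearrangement $\tilde g$ of a given $g$: one would need a $\pi$-measure-preserving bijection $\phi$ of the state space with $g\circ\phi$ increasing, and for a non-uniform Gibbs measure such a $\phi$ need not exist (two configurations at the same level of the Hasse diagram typically carry different $\pi$-mass). Even when a rearrangement exists, the assertion that it cannot increase the single-site Dirichlet form $\mathcal{E}$ for the Ising Gibbs measure is not a standard fact and would need proof; the familiar symmetrization/polarization inequalities of this type apply to product measures or to quite different Dirichlet forms. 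In short, you are asserting the existence of a monotone minimizer of the Rayleigh quotient without an argument that actually delivers it.

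Your ``alternative'' route also does not close the gap. The Peres--Winkler censoring inequality compares two update schedules started from the \emph{same} top configuration; it does not say that $\|\P_\allplus(X_t\in\cdot)-\pi\|_{\mathrm{TV}} \geq \|\P_\sigma(X_t\in\cdot)-\pi\|_{\mathrm{TV}}$ for all $\sigma$, and I am not aware of a proof that $\tmix = \tmix^+$ holds at the exact $1/4$ threshold for general monotone systems. What monotone grand coupling gives is $\bar d(t) = \|\P_\allplus(X_t\in\cdot)-\P_{\text{all-minus}}(X_t\in\cdot)\|_{\mathrm{TV}}$, and the triangle inequality and spin-flip symmetry only yield $d(t) \leq 2\|\P_\allplus(X_t\in\cdot)-\pi\|_{\mathrm{TV}}$, which loses the constant. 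So this route, too, needs a missing ingredient. The clean fix is exactly the one the paper uses: invoke Nacu's Lemma 3 (existence of a monotone second eigenfunction for a reversible monotone chain), then apply your correct step 2 (extracting an odd monotone eigenfunction and concluding $\|f\|_\infty = f(\allplus) = -f(\text{all-minus})$), and finish with the $\tmix^+ = \tmix^-$ symmetry.
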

\begin{proof}
It is well known that $\tmix \geq \log2 \cdot (\gap^{-1}-1)$ (see,
e.g., Theorem~12.4 in \cite{LPW}). Actually, it is shown in the proof of
\cite{LPW}*{Theorem 12.4} that $\tmix^x \geq \log 2\cdot(\gap^{-1} -
1)$ for any state $x$ satisfying $f(x) = \|f\|_\infty$, where $f$ is an
eigenfunction corresponding to the second largest eigenvalue. Since
  the second eigenvalue of
the Glauber dynamics for the ferromagnetic Ising model has an increasing
eigenfunction $f$ (see \cite{Nacu}*{Lemma 3}), we infer that either $\|f\|_\infty = f(+)$ or
$\|f\|_\infty = f(-)$. By symmetry of the all-plus and the all-minus
configurations in the Ising model (\ref{gibbs}), we have $\tmix^+=\tmix^-$, and this
concludes the proof.
\end{proof}

Lemma \ref{gap} implies that  Theorem~\ref{mainthm-1} holds if $\gap^{-1} \geq n \log n$. It remains to
 consider the case  $\gap^{-1} \leq n \log n$.

\begin{lemma}\label{lem-F}
Suppose that the Glauber dynamics for the Ising model on a graph $G= (V, E)$ with $n$ vertices satisfies $\gap^{-1} \leq  n \log n$. Then there exists a subset $F \subset V$ of size $\lfloor\sqrt{n}/\log n\rfloor$ such that
\[\sum_{u, v\in F, u\neq v} \Cov_\mu(\sigma(u), \sigma(v)) \leq \frac{2}{\log n}\,.\]
\end{lemma}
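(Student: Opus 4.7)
The plan is to apply the Poincar\'e inequality to the test function $f(\sigma)=\sum_{v\in V}\sigma(v)$, whose variance naturally decomposes into the pairwise covariances:
\begin{equation*}
\var_\mu(f)=\sum_{v\in V}\var_\mu(\sigma(v))+\sum_{u\neq v}\Cov_\mu(\sigma(u),\sigma(v)).
\end{equation*}
Since the diagonal terms are nonnegative, any upper bound on $\var_\mu(f)$ transfers directly to a bound on the off-diagonal covariance sum.

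To make the Poincar\'e bound $\var_\mu(f)\le\gap^{-1}\mathcal{E}(f,f)$ usable, I will estimate the Dirichlet form of $f$ under heat-bath Glauber dynamics. A single update at a vertex $v$ changes $f$ by $\pm 2$ or $0$, and a direct computation via conditioning on $\sigma_{V\setminus v}$ gives
\begin{equation*}
\mathcal{E}(f,f)=\frac{4}{n}\sum_{v\in V}\E_\mu\bigl[\mu(+\mid\sigma_{V\setminus v})\,\mu(-\mid\sigma_{V\setminus v})\bigr]\le 1,
\end{equation*}
using the elementary bound $p(1-p)\le 1/4$. Hence by the hypothesis of the lemma, $\var_\mu(f)\le\gap^{-1}\le n\log n$, and combining with the variance decomposition yields $\sum_{u\neq v}\Cov_\mu(\sigma(u),\sigma(v))\le n\log n$.

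A first-moment argument then extracts $F$. Sampling $F$ uniformly among the $k$-subsets of $V$ with $k=\lfloor\sqrt n/\log n\rfloor$, each ordered pair $(u,v)$ with $u\neq v$ lies in $F$ with probability $k(k-1)/(n(n-1))$, so
\begin{equation*}
\E\Big[\sum_{u,v\in F,\,u\neq v}\Cov_\mu(\sigma(u),\sigma(v))\Big]\le\frac{k(k-1)}{n(n-1)}\cdot n\log n\le\frac{2}{\log n}
\end{equation*}
for $n\ge 2$, so some realization of $F$ meets the claimed bound.

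The step I expect to require the most care is the Dirichlet form estimate $\mathcal{E}(f,f)\le 1$, but it reduces to a short calculation once one observes that the heat-bath flip probabilities factor over vertices and obey $\mu(+\mid\cdot)\mu(-\mid\cdot)\le 1/4$. Notably, neither the GHS nor the Griffiths inequality is invoked in this lemma; the nonnegativity of $\sum_v\var_\mu(\sigma(v))$ alone is what makes the variance decomposition point in the favorable direction.
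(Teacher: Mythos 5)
Your proof is correct and follows essentially the same route as the paper: bound $\var_\mu(S)$ via the Poincar\'e inequality applied to $S=\sum_v\sigma(v)$, pass to the off-diagonal covariance sum, and extract $F$ by a first-moment argument over uniform random $k$-subsets. You do make two small refinements worth noting. First, you compute $\mathcal{E}(S)\le 1$ via the $p(1-p)\le 1/4$ trick, whereas the paper is content with the cruder observation that a single-site update changes $S$ by at most $2$, giving $\mathcal{E}(S)\le 2$; both yield the claimed $2/\log n$ after the averaging step, since you compensate with the exact factor $k(k-1)/(n(n-1))$ while the paper relaxes it to $k^2/n^2$. Second, you correctly observe that FKG is not needed: the bound $\sum_{u\ne v}\Cov\le\var_\mu(S)$ follows from nonnegativity of the diagonal variances alone, and the random-subset argument (``some realization is at most the mean'') is sign-agnostic. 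The paper does invoke FKG at this point, but only to justify applying its Claim~2.3, whose nonnegativity hypothesis is used solely to relax $k(k-1)/(n(n-1))$ to $k^2/n^2$; since you keep the exact ratio, the hypothesis is dispensable. (FKG is, however, genuinely used later in the paper's Lemma~\ref{lem-contraction}.) The argument and final inequality $n/((n-1)\log n)\le 2/\log n$ for $n\ge 2$ check out.
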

\begin{proof}
We first establish an upper bound on the variance of the sum of
spins $S = S(\sigma) = \sum_{v\in V} \sigma(v)$. The variational
principle for the spectral gap of a reversible Markov chain with
stationary measure $\pi$ gives (see, e.g., \cite{AF}*{Chapter 3} or \cite{LPW}*{Lemma 13.12}:
\begin{align*}
\gap = \inf_f \frac{\mathcal{E}(f)}
{\var_{\pi}(f)}\,,
\end{align*}
where $\mathcal{E}(f)$ is the Dirichlet form defined by
\begin{align*}
\mathcal{E}(f) = \left<(I-P)f,f\right>_{\pi} = \tfrac{1}{2}\sum_{x,y\in\Omega}\left[f(x)-f(y)\right]^2\pi(x)P(x,y)\,.
\end{align*}

Applying the variational principle with the test function $S$, we
deduce that
\begin{equation*}
\gap \leq \frac{\mathcal{E}(S)}{\var_\mu(S)}\,.
\end{equation*}
Since the Glauber dynamics updates a single spin at each step, $\mathcal{E}(S)\leq 2$, whence
\begin{equation}\label{eq-var-bound}
\var_\mu(S) \leq \mathcal{E}(S) \gap^{-1} \leq 2\,n \log n\,.
\end{equation}
The covariance of the spins for the
ferromagnetic Ising model is non-negative by the FKG inequality (see, e.g., \cite{GHM}).  Applying
Claim~\ref{claim-matrix} below with $k =
\lfloor\tfrac{\sqrt{n}}{\log n}\rfloor$ to the covariance matrix of
$\sigma$  concludes the proof of the lemma.
\end{proof}
\begin{claim}\label{claim-matrix}
Let $A$ be an $n \times n$ matrix with non-negative entries. Then
for any $k\leq n$ there exists $F\subset \{1, \ldots, n\}$ such that
$|F| = k$ and
\[\sum_{i, j \in F}A_{i, j}\one_{\{i\neq j\}} \leq \frac{k^2
}{n^2}\sum_{i\neq j} A_{i, j}\,.\]
\end{claim}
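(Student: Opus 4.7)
The plan is a straightforward first-moment argument: choose $F$ uniformly at random among all $k$-element subsets of $\{1,\ldots,n\}$ and show the expected value of the left-hand side is at most the right-hand side.

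By linearity of expectation,
\[
\E\Bigl[\sum_{i,j\in F,\, i\neq j} A_{i,j}\Bigr] \;=\; \sum_{i\neq j} A_{i,j}\,\P[i\in F,\ j\in F].
\]
For any fixed pair $i\neq j$, a uniformly random $k$-subset contains both $i$ and $j$ with probability $\binom{n-2}{k-2}/\binom{n}{k} = k(k-1)/(n(n-1))$.

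Next I would compare $k(k-1)/(n(n-1))$ with $k^2/n^2$. The inequality $k(k-1)n^2 \leq k^2 n(n-1)$ is equivalent (after dividing by $kn>0$) to $(k-1)n \leq k(n-1)$, i.e., $k \leq n$, which holds by hypothesis. Hence
\[
\E\Bigl[\sum_{i,j\in F,\, i\neq j} A_{i,j}\Bigr] \;\leq\; \frac{k^2}{n^2}\sum_{i\neq j} A_{i,j},
\]
and some realization of $F$ must achieve a value at most the expectation, proving the claim.

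There is no real obstacle here; the only subtle point is verifying that the natural pair-containment probability $k(k-1)/(n(n-1))$ is bounded by $k^2/n^2$ (so the ``lossy'' bound in the claim is exactly what makes the averaging work cleanly, and in fact the sharper constant $k(k-1)/(n(n-1))$ could replace $k^2/n^2$ if desired).
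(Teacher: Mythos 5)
Your proof is correct and essentially identical to the paper's: both sample a uniformly random $k$-subset, compute the pair-inclusion probability $k(k-1)/(n(n-1))$ by linearity of expectation, bound it by $k^2/n^2$, and conclude by the first-moment method.
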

\begin{proof}
Let $R$ be a uniform random subset of $\{1, \ldots, n\}$ with $|R| =
k$. Then,
\begin{align*}
\E \Big[\sum_{i,j\in R} A_{i, j}\one_{\{i \neq j\}} \Big]& =  \sum_{1 \leq i,j \leq n} A_{i, j} \one_{\{i\neq j\}} \P( i, j \in R) \\
&= \frac{k(k-1)}{n(n-1)}  \sum_{1 \leq i,j \leq n} A_{i, j}
\one_{\{i\neq j\}} \leq \frac{k^2}{n^2}\sum_{i\neq j} A_{i, j}\,.
\end{align*}
Existence of the desired subset $F$ follows immediately.
\end{proof}
We now consider a version of accelerated dynamics $(X_t)$ with respect to the subset $F$ as in Lemma~\ref{lem-F}. The accelerated dynamics selects a vertex $v \in V$ uniformly at random  at each time and updates in the following way:
\begin{itemize}
\item If $v\not \in F$, we update   $\sigma(v)$ as in the usual
Glauber dynamics.
\item If $v \in F$, we update the spins on $\{v\} \cup F^c$
all together as a block, according to the conditional Gibbs measure given the spins on $F\setminus\{v\}$.
\end{itemize}
The next censoring inequality of \cite{PW} guarantees that, starting from the all-plus configuration, the accelerated dynamics indeed mixes faster than the original one. In what follows, write $\mu \preceq \nu$ if $\nu$ stochastically dominates $\mu$. 
\begin{theorem}[\cite{PW} and also see \cite{Peres}*{Theorem 16.5}]\label{thm-PW-speed-up}
Let $(\Omega, S, V, \pi)$ be a monotone system and let $\mu$ be the
distribution on $\Omega$ which results from successive updates at
sites $v_1, \ldots, v_m$, beginning at the top configuration. Define
$\nu$ similarly but with updates only at a subsequence $v_{i_1},
\ldots, v_{i_k}$. Then $\mu \preceq \nu$, and $\|\mu - \pi
\|_{\mathrm{TV}} \leq \|\nu-\pi\|_{\mathrm{TV}}$. Moreover, this
also holds if the sequence $v_1, \ldots, v_m$ and the subsequence
$i_1, \ldots, i_k$ are chosen at random according to any prescribed
distribution.
\end{theorem}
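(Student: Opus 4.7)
The plan is to reduce the theorem to a single-step monotonicity claim about heat-bath updates acting on ``top-heavy'' distributions. Call a distribution $\rho$ on $\Omega$ \emph{top-heavy} if the density $\rho/\pi$ is a monotone increasing function on the partially ordered state space; the point mass at the top configuration is trivially top-heavy, since its density vanishes away from the maximum. The central Key Lemma I would establish is: if $\rho$ is top-heavy and $v \in V$, then the single-site heat-bath update $P_v\rho$ is again top-heavy and satisfies $P_v\rho \preceq \rho$ as well as $\|P_v\rho-\pi\|_{\mathrm{TV}} \leq \|\rho-\pi\|_{\mathrm{TV}}$.

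Granting the Key Lemma, the theorem follows by interpolation. Restore the censored updates in $\nu$'s sequence one at a time, in their correct positions, to gradually build up $\mu$'s sequence. Each insertion step compares two distributions of the form (suffix)$\circ$(prefix) versus (suffix)$\circ P_v \circ$(prefix), both applied to the top configuration. Writing $A$ for the output of the prefix (which is top-heavy by iterating the Key Lemma), the lemma gives $P_vA \preceq A$ and $\|P_vA-\pi\|_{\mathrm{TV}} \leq \|A-\pi\|_{\mathrm{TV}}$. Applying the remaining suffix operators preserves the stochastic order, because each heat-bath update in a monotone system is monotone in its input (via the standard grand coupling using a single uniform variable), and preserves the TV bound by the contraction $\|P_w X - P_w Y\|_{\mathrm{TV}} \leq \|X - Y\|_{\mathrm{TV}}$ applied with $Y = \pi$. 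Iterating through all inserted updates yields the desired $\mu \preceq \nu$ and TV inequality. The random extension follows by conditioning on the realized sequences and taking expectations, since both stochastic dominance and TV distance to a fixed measure are preserved under such mixtures.

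The main obstacle is establishing the Key Lemma, in particular that top-heaviness is preserved. A direct calculation shows
\[\frac{(P_v\rho)(\sigma)}{\pi(\sigma)}=\frac{\rho(\sigma_{V\setminus\{v\}})}{\pi(\sigma_{V\setminus\{v\}})}=\sum_{s}\frac{\rho(\sigma_{V\setminus\{v\}},s)}{\pi(\sigma_{V\setminus\{v\}},s)}\,\pi(s\mid\sigma_{V\setminus\{v\}}),\]
so $(P_v\rho)/\pi$ depends only on $\sigma_{V\setminus\{v\}}$. To prove this ratio is increasing in $\sigma_{V\setminus\{v\}}$, I would use two features of a monotone system: (i) $\pi(\cdot\mid \sigma_{V\setminus\{v\}})$ is stochastically increasing in $\sigma_{V\setminus\{v\}}$, and (ii) $s \mapsto \rho(\sigma_{V\setminus\{v\}},s)/\pi(\sigma_{V\setminus\{v\}},s)$ is increasing in $s$ by top-heaviness. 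Together, enlarging $\sigma_{V\setminus\{v\}}$ both raises the integrand at each $s$ (by top-heaviness applied to the full configuration) and shifts more mass to larger $s$ (by (i)), so the expectation is increasing. The stochastic dominance $P_v\rho \preceq \rho$ then follows from a short calculation showing that the conditional distribution of $\sigma(v)$ under $\rho$ dominates that under $\pi$ pointwise in the remaining coordinates, so resampling at $v$ from the $\pi$-conditional can only lower the distribution stochastically. The TV inequality is a direct consequence of Jensen's inequality applied to $t \mapsto |t-1|$, or equivalently of the standard TV-contraction for reversible Markov chains.
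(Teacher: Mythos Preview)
The paper does not prove this theorem; it is quoted from \cite{PW} (see also \cite{Peres}) and invoked as a black box, so there is no in-paper proof to compare against. Your outline is in fact the standard Peres--Winkler argument: one shows that single-site heat-bath updates preserve the property of having an increasing Radon--Nikodym derivative with respect to $\pi$ (your ``top-heavy'' condition), and that an extra update applied to a top-heavy distribution can only move it downward in the stochastic order. Your treatment of the stochastic-dominance half via one-at-a-time insertion, together with monotonicity of each $P_w$, is correct.

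There is, however, a gap in your total-variation argument at the interpolation step. After inserting the update $P_v$ you obtain $\|P_vA-\pi\|_{\mathrm{TV}}\le\|A-\pi\|_{\mathrm{TV}}$, but what you must compare is $\|SP_vA-\pi\|_{\mathrm{TV}}$ against $\|SA-\pi\|_{\mathrm{TV}}$, where $S$ denotes the suffix of updates. The contraction $\|P_wX-\pi\|_{\mathrm{TV}}\le\|X-\pi\|_{\mathrm{TV}}$ that you invoke only bounds each of these two quantities from above by $\|P_vA-\pi\|_{\mathrm{TV}}$ and $\|A-\pi\|_{\mathrm{TV}}$ respectively; it does not compare them to each other. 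The clean fix is to drop the step-by-step TV comparison and argue once at the end: since $\mu$ is top-heavy, the set $U=\{\sigma:\mu(\sigma)>\pi(\sigma)\}$ is an up-set and $\|\mu-\pi\|_{\mathrm{TV}}=\mu(U)-\pi(U)$; then $\mu\preceq\nu$ gives $\mu(U)\le\nu(U)$, whence $\|\mu-\pi\|_{\mathrm{TV}}\le\nu(U)-\pi(U)\le\|\nu-\pi\|_{\mathrm{TV}}$. With this correction your proposal is a complete and correct proof.
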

In order to see how the above theorem indeed implies that the
accelerated dynamics $(X_t)$ mixes at least as fast as the
usual dynamics, first note that any vertex $u\notin F$ is updated
according to the original rule of the Glauber dynamics. Second, for
$u\in F$, instead of updating the block $\{u\} \cup F^c$, we can
simulate this procedure by performing sufficiently many single-site
updates in $\{u\} \cup F^c$. This approximates the accelerated
dynamics arbitrarily well, and contains a superset of the
single-site updates of the usual Glauber dynamics. Theorem
\ref{thm-PW-speed-up} thus completes this argument.

Let  $(Y_t)$ be the projection of the chain $(X_t)$ onto the
subgraph $F$. Recalling the definition of the accelerated dynamics,
we see that $(Y_t)$ is also a Markov chain, and the stationary
measure $\nu_F$ for $(Y_t)$ is the projection of $\mu_G$ to $F$.
Furthermore, consider the subsequence $(Z_t)$ of the chain $(Y_t)$
obtained by skipping those times when updates occurred outside of
$F$ in $(X_t)$. Namely, let $Z_t = Y_{K_t}$ where $K_t$ is the
$t$-th time that a block $\{v\}\cup F^c$ is updated in the chain
$(X_t)$. Clearly, $(Z_t)$ is a Markov chain on the space $\{-1,
1\}^F$, where at each time a uniform vertex $v$ from $F$ is selected
and updated according to the conditional Gibbs measure $\mu_G$ given
the spins on $F\setminus \{v\}$. The stationary measure for $(Z_t)$ is
also $\nu_F$.

Let $\cS_t = \sum_{v \in F}Z_t(v)$ be the sum of spins over $F$ in
the chain $(Z_t)$. It turns out that $\cS_t$ is a distinguishing
statistic and its analysis yields a lower bound on the
mixing time for chain $(Z_t)$. To this end, we need to estimate the
first two moments of $S_t$.
\begin{lemma}\label{lem-expectation-bound}
Let $(Z_t^{(+)})$ be an instance of the chain $(Z_t)$ started at the
all-plus configuration. Then its sum of spins satisfies that
$$\E_{+}(\cS_t) \geq |F| \Big(1 - \tfrac{1}{|F|}\Big)^t\,.$$
\end{lemma}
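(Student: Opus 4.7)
The plan is to derive a one-step recursion for $\E_+[\cS_t]$ and show the claimed lower bound by induction, starting from $\cS_0 = |F|$. For $\tau \in \{-1,+1\}^{F\setminus\{v\}}$, write $g_v(\tau) := \E_{\mu_G}[\sigma(v) \mid \sigma_{F\setminus\{v\}} = \tau]$ for the conditional mean used to update $Z_t(v)$. Since at each step of $(Z_t)$ a uniformly chosen $v\in F$ is resampled from this conditional distribution, the expected change of $\cS_t$ equals $\frac{1}{|F|}\sum_{v\in F}\bigl[g_v(Z_t|_{F\setminus\{v\}}) - Z_t(v)\bigr]$, giving
\[
\E_+[\cS_{t+1}] = \Big(1-\tfrac{1}{|F|}\Big)\E_+[\cS_t] + \tfrac{1}{|F|}\,\E_+[\phi(Z_t)],
\]
where $\phi(z) := \sum_{v \in F} g_v(z|_{F\setminus\{v\}})$. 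Hence the lemma reduces to proving $\E_+[\phi(Z_t)] \geq 0$ for every $t \geq 0$.

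Next I would record two structural properties of $\phi$. First, $\phi$ is \emph{odd}: the Hamiltonian in \eqref{gibbs} carries no external field, so $\mu_G(\sigma) = \mu_G(-\sigma)$ and thus $g_v(-\tau) = -g_v(\tau)$, which yields $\phi(-z) = -\phi(z)$. Second, $\phi$ is coordinatewise non-decreasing: each $g_v$ is coordinatewise non-decreasing on $\{-1,+1\}^{F\setminus\{v\}}$ by FKG/ferromagnetic monotonicity of $\mu_G$, and raising $z(u)$ affects only the summands with $v \neq u$, each of which weakly increases.

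To prove $\E_+[\phi(Z_t)] \geq 0$ I would compare the chains $(Z_t^{(+)})$ and $(Z_t^{(-)})$ started at all-plus and all-minus, respectively. The standard monotone coupling---pick the same vertex and use the same uniform random variable in both chains at every step---produces $Z_t^{(+)} \geq Z_t^{(-)}$ coordinatewise for all $t$, so monotonicity of $\phi$ gives $\E[\phi(Z_t^{(+)})] \geq \E[\phi(Z_t^{(-)})]$. On the other hand, since each $g_v$ is odd, the dynamics commutes with the global spin flip and therefore $Z_t^{(-)} \stackrel{d}{=} -Z_t^{(+)}$; combined with oddness of $\phi$ this yields $\E[\phi(Z_t^{(-)})] = -\E_+[\phi(Z_t)]$. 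The two relations force $\E_+[\phi(Z_t)] \geq 0$, and the induction closes: $\E_+[\cS_{t+1}] \geq (1 - 1/|F|)\E_+[\cS_t]$, so $\E_+[\cS_t] \geq |F|(1-1/|F|)^t$.

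I do not anticipate a serious obstacle here; the only point that requires a moment of care is verifying that the projected chain $(Z_t)$ inherits both a monotone coupling and the global spin-flip symmetry from $\mu_G$, and both reduce immediately to the monotonicity and oddness of the functions $g_v$ noted above.
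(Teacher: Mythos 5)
Your proof is correct, but it takes a genuinely different route from the paper. The paper's argument is a one-shot coupon-collecting bound: it couples $(Z_t^{(+)})$ monotonically with a copy $(Z_t^*)$ started from stationarity $\nu_F$, uses that $Z_t^*$ stays stationary even conditioned on the update sequence $U[t]$ so that $\E_+[Z_t^{(+)}(v) \mid v\in U[t]] \geq \E[Z_t^*(v)] = 0$, and then observes $Z_t^{(+)}(v)=1$ on the event $v\notin U[t]$, which has probability $(1-1/|F|)^t$; summing over $v$ finishes. You instead set up a one-step recursion $\E_+[\cS_{t+1}] = (1-1/|F|)\E_+[\cS_t] + |F|^{-1}\E_+[\phi(Z_t)]$ and prove $\E_+[\phi(Z_t)]\geq 0$ by comparing the all-plus chain with the all-minus chain under monotone coupling and exploiting the global spin-flip symmetry together with the oddness and monotonicity of $\phi$. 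Both arguments hinge on the same underlying facts (monotonicity of the projected chain $(Z_t)$ — which in turn needs FKG for the ferromagnetic Ising model — and the zero-field symmetry $\E_{\nu_F}\sigma(v)=0$), but the paper's use of a stationary companion chain is more economical: it delivers the exact coupon-collector bound directly without decomposing the drift or verifying structural properties of $\phi$. Your recursion, on the other hand, makes the drift mechanism explicit and isolates the sign of the correction term, which is a clean way to see \emph{why} ferromagnetic correlations can only help. One small point worth stating explicitly in your write-up: the monotone coupling and the spin-flip equivariance of $(Z_t)$ both require that the conditional law $\mu_G(\sigma(v)\in\cdot\mid\sigma_{F\setminus\{v\}}=\tau)$ is stochastically increasing in $\tau$ and odd under $\tau\mapsto-\tau$; you note this at the end, and both reduce to the FKG property and zero-field symmetry of $\mu_G$, so the gap is only expository, not logical.
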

\begin{proof}
The proof follows essentially from a coupon collecting argument.
Let $(Z_t^*)$ be another instance of the chain $(Z_t)$, started from
$\nu_F$. It is obvious that we can construct a monotone coupling
between $(Z_t^{(+)})$ and $(Z_t^*)$ (namely, $Z_t^{(+)} \geq Z_t^*$
for all $t\in \N$) such that the vertices selected for updating in
both chains are always the same. Denote by $U[t]$ this (random)
sequence of vertices updated up to time $t$. Note that $Z_t^*$ has
law $\nu_F$, even if conditioned on the sequence $U[t]$. Recalling
that $Z_t^{(+)} \geq Z^*_t$ and $\E_\mu \sigma(v) = 0$, we obtain
that
\begin{equation*}
\E_{+}[Z_t^{(+)}(v) \mid v\in U[t]] \geq 0\,.
\end{equation*}
It is clear that $Z_t^{(+)}(v) = 1$ if $v \not\in U[t]$. Therefore,
\[\E_{+}[Z_t^{(+)}(v)] \geq \P(v\not\in U[t]) = (1 - \tfrac{1}{|F|})^t\,.\]
Summing over $v\in F$  concludes the proof.
\end{proof}
We next establish a contraction result for the chain $(Z_t)$.  We
need the GHS inequality of \cite{GHS} (see also \cite{Lebowitz} and
\cite{EM}). To state this inequality, we recall the definition of
the Ising model {\em with an external field}. Given a finite graph
$G=(V,E)$ with interaction strengths $J=\{J_{uv} \geq 0: uv\in E\}$
and external magnetic field $H=\{H_v: v\in V\}$,  the probability
for a configuration $\sigma \in \Omega = \{\pm1\}^V$
 is given by
 \begin{equation} \label{gibbsH}
 \mu_G^H(\sigma)  = \frac1{Z(J, H)} \exp\Big( \sum_{uv\in E}J_{uv}\sigma(u)\sigma(v) + \sum_{v \in V} H(v) \sigma(v)\Big)\,,
 \end{equation}
where $Z(J, H)$ is a normalizing constant.  Note that this specializes to (\ref{gibbs}) if $H \equiv 0$. When there is no ambiguity for the base graph, we sometimes drop the subscript $G$.
We can now state the

\medskip

\noindent{\bf GHS inequality~\cite{GHS}.} For a graph $G = (V, E)$,
let $\mu^H=\mu_G^H$ as above, and denote by $m_v(H) =
\E_{\mu^H}[\sigma(v)]$ the local magnetization at vertex $v$. If
$H_v \geq 0$ for all $v\in V$, then for any three vertices $u, v, w
\in V$ (not necessarily distinct),
\[\frac{\partial^2 m_v(H)}{\partial H_u \partial H_w} \leq 0\,.\]
The following is a consequence of the GHS inequality.
\begin{corollary}\label{cor-subaddtive}
For the Ising measure $\mu$ with no external field, we have
\begin{equation*}
\E_\mu[\sigma(u) \mid v_i = 1 \mbox{ for all }1\leq i \leq k] \leq \sum_{i=1}^k \E_\mu[\sigma(u) \mid v_i = 1]\,.
\end{equation*}
\end{corollary}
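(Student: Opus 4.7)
The plan is to realize the conditioning event $\{\sigma(v_i) = 1\}$ as the $H_{v_i} \to \infty$ limit of a positive external magnetic field at $v_i$, and then to deduce the corollary from a subadditivity bound for the magnetization $m_u(H) = \E_{\mu^H}[\sigma(u)]$ viewed as a function of the external field. Concretely, let $\mu^H$ be the measure \eqref{gibbsH} with $H = \sum_{i=1}^{k} h_i \one_{v_i}$ supported on $\{v_1, \ldots, v_k\}$. As $h_1, \ldots, h_k \to \infty$, any configuration with some $\sigma(v_j) = -1$ is suppressed by a factor $e^{-2 h_j}$, so $\mu^H$ converges to $\mu(\,\cdot \mid \sigma(v_1) = \ldots = \sigma(v_k) = 1)$. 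Writing $m_u(h_1, \ldots, h_k)$ for the magnetization at $u$ under this field, we obtain $m_u(h_1, \ldots, h_k) \to \E_\mu[\sigma(u) \mid \sigma(v_i) = 1 \text{ for all } i]$, and similarly $m_u(0, \ldots, h_i, \ldots, 0) \to \E_\mu[\sigma(u) \mid \sigma(v_i) = 1]$; we also have $m_u(0) = 0$ by the global spin-flip symmetry.

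The main step is then to establish the subadditivity bound
\[
m_u(h_1, \ldots, h_k) \leq \sum_{i=1}^{k} m_u(0, \ldots, h_i, \ldots, 0), \qquad h_i \geq 0,
\]
by induction on $k$ (the case $k = 1$ being trivial). For the inductive step, GHS gives $\partial^2 m_u/(\partial h_1 \partial h_j) \leq 0$, so $\partial m_u/\partial h_1$ is non-increasing in each $h_j$ with $j \geq 2$. Integrating,
\[
m_u(h_1, h_2, \ldots, h_k) - m_u(0, h_2, \ldots, h_k) = \int_0^{h_1} \frac{\partial m_u}{\partial h_1}(t, h_2, \ldots, h_k) \, dt \leq \int_0^{h_1} \frac{\partial m_u}{\partial h_1}(t, 0, \ldots, 0) \, dt = m_u(h_1, 0, \ldots, 0).
\]
Rearranging yields $m_u(h_1, \ldots, h_k) \leq m_u(h_1, 0, \ldots, 0) + m_u(0, h_2, \ldots, h_k)$, and applying the inductive hypothesis to the last term closes the induction.

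Finally, sending each $h_i \to \infty$ in the displayed subadditivity inequality and invoking the convergence from the first paragraph gives the corollary. The whole argument is essentially a direct repackaging of GHS via integration, so no step is especially delicate; if anything, the subtlest point is the identification of the conditioning with the infinite-field limit, which is a standard soft-core fact about the Gibbs measure \eqref{gibbsH}.
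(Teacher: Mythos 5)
Your proof is correct and follows essentially the same route as the paper: both use the GHS inequality to establish subadditivity of $m_u$ in the external field, then take the field at the $v_i$'s to infinity to recover the conditional expectations. The only organizational difference is that the paper factors the integration argument out into a standalone analytic lemma (Claim~\ref{claim-analysis}, giving $f(x+y)-f(x)\le f(y)-f(0)$ for $C^2$ functions with nonpositive mixed second partials) and then applies it inductively, whereas you fold the integration directly into the induction on $k$; the content is the same.
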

\begin{proof}
The function $f(H) = m_u(H)$ satisfies $f(0) = 0$.
 By the GHS inequality and Claim~\ref{claim-analysis} below, we
obtain that for all $H, H'\in \R_+^n$:
\begin{equation} \label{subadd}
m_u(H + H') \leq m_u(H) + m_u(H')\,.
\end{equation}
For $1 \leq i\leq k$ and $h\geq 0$, let $H_i^{h}$ be the external
field taking value $h$ on $v_i$ and vanishing on $V\setminus\{v_i\}$.
Applying the  inequality (\ref{subadd}) inductively, we deduce that
\[m_u\Big(\mbox{$\sum_i$} H_i^h\Big) \leq  \mbox{$\sum_i$} m_u(H_i^h)\,.\]
 Finally, let
$h\to \infty$ and observe that $m_u(H_i^h) \to
\E_{\mu}[\sigma(u)\mid \sigma(v_i) =1]$ and $m_u(\sum_i H_i^h) \to
\E_{\mu}[\sigma(u)\mid \sigma(v_i) =1 \mbox{ for all }1\leq i \leq
k]$.
\end{proof}
\begin{claim}\label{claim-analysis}
Write $\R_+ = [0, \infty)$ and let $f: \R_+^n \mapsto \R$ be a $C^2$-function such
that $\frac{\partial^2 f(x)}{\partial x_i\partial x_j} \leq 0$ for
all $x \in \R_+^n$ and $1\leq i,j\leq n$. Then for all $x, y\in
\R_+^n$,
\[f(x+y) - f(x) \leq f(y) - f(0)\,.\]
\end{claim}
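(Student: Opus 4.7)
The claim is a multivariable version of concavity/subadditivity along diagonal directions; the cleanest route is to reduce it to a one-dimensional monotonicity statement. The plan is to fix $x, y \in \R_+^n$ and define
\[
 g(t) = f(x+ty) - f(ty), \qquad t \in [0,1],
\]
and show that $g$ is non-increasing. Note that $g(0) = f(x) - f(0)$ and $g(1) = f(x+y) - f(y)$, so $g(1) \leq g(0)$ immediately gives the desired inequality $f(x+y) - f(x) \leq f(y) - f(0)$ after rearrangement.

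To show $g' \leq 0$, I would differentiate under the chain rule to get
\[
 g'(t) = \sum_{i=1}^n y_i \left[ \frac{\partial f}{\partial x_i}(x+ty) - \frac{\partial f}{\partial x_i}(ty) \right],
\]
and then express each bracketed difference via the fundamental theorem of calculus along the segment from $ty$ to $x+ty$:
\[
 \frac{\partial f}{\partial x_i}(x+ty) - \frac{\partial f}{\partial x_i}(ty) = \int_0^1 \sum_{j=1}^n x_j \, \frac{\partial^2 f}{\partial x_i \partial x_j}(sx + ty) \, ds.
\]
Substituting back yields a double sum
\[
 g'(t) = \int_0^1 \sum_{i,j} x_j\, y_i \, \frac{\partial^2 f}{\partial x_i \partial x_j}(sx + ty) \, ds,
\]
where the integrand is a sum of non-positive terms, since $x_j, y_i \geq 0$ and the mixed partials are non-positive on $\R_+^n$. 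Hence $g'(t) \leq 0$ on $[0,1]$, so $g(1) \leq g(0)$, which is exactly the claim.

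There is essentially no obstacle here; the only mild subtlety is making sure the straight-line path from $ty$ to $x+ty$ (and from $0$ to $sx+ty$) stays inside $\R_+^n$ so that the hypothesis on the Hessian applies, which is automatic because $\R_+^n$ is convex and $x, y, ty, sx+ty$ all lie in it for $s,t \in [0,1]$. The argument never needs $f$ to be concave in each variable separately or any global concavity; only the sign condition on the mixed partials together with the non-negativity of the increments $x, y$ is used, which is precisely what the GHS inequality provides in the application to $m_u(H)$.
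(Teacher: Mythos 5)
Your proof is correct and is essentially the same argument as the paper's: both reduce to showing that the directional derivative $t \mapsto \sum_i y_i\,\partial_i f(x+ty)$ is pointwise dominated by the corresponding quantity with $x$ replaced by $0$, and then integrate over $t \in [0,1]$. The only cosmetic difference is that you verify this domination by writing the increment of each $\partial_i f$ as an integral of second partials, whereas the paper appeals directly to monotonicity of $\partial_i f$ in each coordinate; both observations are instances of the same sign condition on the Hessian.
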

\begin{proof}
Since all the second derivatives are non-positive,
$\frac{\partial f(x)}{\partial x_i}$ is decreasing in every
coordinate with $x$ for all $x\in \R_+^n$ and $i\leq n$. Hence,
$\frac{\partial f(x)}{\partial x_i}$ is decreasing in $\R_+^n$. Let
$$g_x(t) = \frac{d f(x + ty)}{d t} =\sum_i y_i \frac{\partial f(x)}{\partial x_i}(x+ty).$$ It  follows that $g_x(t)
\leq g_0(t)$ for all $x, y\in \R_+^n$. Integrating over $t\in [0,
1]$ yields the claim.
\end{proof}
\begin{lemma}\label{lem-contraction}
Suppose that $n\geq \mathrm{e}^{4}$. Let $(\tilde{Z}_t)$ be another
instance of the chain $(Z_t)$. Then for all starting states $z_0$
and $\tilde{z}_0$, there exists a coupling such that
\[\E_{z_0, \tilde{z}_0} \Big[\sum_{v\in F} |Z_t(v) - \tilde{Z}_t(v)|\Big] \leq \Big(1 - \frac{1}{2|F|}\Big)^t \sum_{v\in F} |z_0(v) - \tilde{z}_0(v)|\,.\]
\end{lemma}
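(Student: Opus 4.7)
The plan is to use path coupling: it suffices to exhibit, for any two configurations $z, \tilde{z} \in \{-1,1\}^F$ at Hamming distance one, a one-step coupling of the chain with expected new Hamming distance at most $1 - \tfrac{1}{2|F|}$; iterating then yields the bound of the lemma for arbitrary starting pairs.

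I would use the standard monotone (grand) coupling of a single step: pick a common update site $v \in F$ uniformly at random and draw a common $U \sim \mathrm{Unif}[0,1]$, then set the new spin at $v$ in each chain to $+1$ iff $U$ lies below the corresponding conditional probability $p_v(\,\cdot\,) = \mu_G(\sigma(v) = 1 \mid \sigma|_{F \setminus \{v\}} = \,\cdot\,)$. Assuming $z, \tilde{z}$ differ only at $u$ with $z(u) = 1$ and $\tilde{z}(u) = -1$, if $v = u$ (probability $1/|F|$) the two conditional distributions coincide and the new Hamming distance drops to $0$; if $v \neq u$, the chains still disagree at $u$, while the monotone coupling creates a new disagreement at $v$ with probability exactly $p_v(z) - p_v(\tilde{z}) \geq 0$ (nonnegativity by FKG, since $z \geq \tilde{z}$). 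Therefore the expected new distance equals $\tfrac{|F|-1}{|F|} + \tfrac{1}{|F|} \sum_{v \in F \setminus \{u\}} (p_v(z) - p_v(\tilde{z}))$, so the desired $1 - \tfrac{1}{2|F|}$ contraction will follow once I establish $\sum_{v \in F \setminus \{u\}} (p_v(z) - p_v(\tilde{z})) \leq \tfrac{1}{2}$.

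To prove this sum bound uniformly in $z, \tilde z$, write $m_v^z = \E_\mu[\sigma(v) \mid \sigma|_{F \setminus \{v\}} = z] = 2 p_v(z) - 1$. FKG monotonicity gives $m_v^z \leq m_v^{\mathbf{1}}$ and $m_v^{\tilde z} \geq m_v^{-\mathbf{1}} = -m_v^{\mathbf{1}}$ (the latter equality by sign symmetry of the no-field Ising measure), hence $p_v(z) - p_v(\tilde z) \leq m_v^{\mathbf{1}}$. I would then apply Corollary \ref{cor-subaddtive} to the vertex $v$ with conditioning set $F \setminus \{v\}$ to obtain
\begin{equation*}
m_v^{\mathbf{1}} \leq \sum_{w \in F \setminus \{v\}} \E_\mu[\sigma(v) \mid \sigma(w) = 1] = \sum_{w \in F \setminus \{v\}} \Cov_\mu(\sigma(v), \sigma(w)),
\end{equation*}
the equality coming from $\E_\mu[\sigma(v)] = \E_\mu[\sigma(w)] = 0$ together with the two-state conditioning identity. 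Summing over $v \in F \setminus \{u\}$ and invoking Lemma \ref{lem-F}, I get
\begin{equation*}
\sum_{v \in F \setminus \{u\}} m_v^{\mathbf{1}} \leq \sum_{\substack{v, w \in F \\ v \neq w}} \Cov_\mu(\sigma(v), \sigma(w)) \leq \frac{2}{\log n} \leq \frac{1}{2},
\end{equation*}
using $n \geq \mathrm{e}^4$ in the final step. This finishes the single-step contraction, and path coupling delivers the lemma.

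The main obstacle is the reduction of the single-site influence $p_v(z) - p_v(\tilde z)$ to unconditional two-point correlations: the two-step move---FKG to replace an arbitrary boundary condition on $F \setminus \{v\}$ by the all-plus extremal one, then GHS subadditivity to split that single joint conditioning into a sum of pair correlations---is where both the ferromagnetic structure and the covariance bound of Lemma \ref{lem-F} are used, and extracting constants tight enough to meet the $n \geq \mathrm{e}^4$ threshold is what makes the estimate nontrivial.
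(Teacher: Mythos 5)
Your proposal is correct and follows essentially the same route as the paper: a one-step monotone coupling on Hamming-distance-one pairs, bounding the site influence $p_v(z)-p_v(\tilde z)$ first by the extremal all-plus/all-minus boundary gap via FKG, then by pairwise covariances via Corollary~\ref{cor-subaddtive}, and finally invoking Lemma~\ref{lem-F} to get the $\frac{2}{\log n}\le\frac12$ bound. The paper's "triangle inequality and recursion" is exactly the path-coupling step you name, and your $m_v^z$ notation is just a rescaling of the paper's $\psi$; the two arguments are line-for-line equivalent.
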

\begin{proof}
Fix $\eta, \tilde{\eta} \in \{-1, 1\}^F$ such that $\eta$ and
$\tilde{\eta}$ differ only at the vertex $v$ and $\eta(v) = 1$. We
consider two chains $(Z_t)$ and $(\tilde{Z}_t)$ under monotone
coupling, started from $\eta$ and $\tilde{\eta}$ respectively.
Let $\eta_A$ be the restriction of $\eta$ to $A$ for $A \subset F$
(namely, $\eta_A \in  \{-1, 1\}^A$ and $\eta_A(v) = \eta(v)$ for all
$v\in A$), and write
\[\psi(u, \eta, \tilde{\eta}) = \E_\mu\big[\sigma(u) \mid \sigma_{F \setminus \{u\}} = \eta_{F \setminus \{u\}}\big] - \E_\mu\big[\sigma(u)\mid \sigma_{F \setminus \{u\}}= \tilde{\eta}_{F \setminus \{u\}}\big]\,.\] By the monotone property and symmetry of the Ising model,
\begin{align*}
\psi(u, \eta, \tilde{\eta}) &\leq \E_\mu[\sigma(u) \mid \sigma_{F\setminus \{u\}} = + ] - \E_\mu[\sigma(u)\mid \sigma_{F\setminus\{u\}} = -]\nonumber\\
 &=2 \E_\mu[\sigma(u) \mid \sigma_{F\setminus \{u\}} = + ] \,.
\end{align*}

By Corollary~\ref{cor-subaddtive},
\begin{align*}
\psi(u, \eta, \tilde{\eta}) \leq2\sum_{w\in F \setminus \{u\}}
\E_\mu[\sigma(u)\mid \sigma(w) =1] = 2 \sum_{w\in F\setminus \{u\}}
\Cov(\sigma(u), \sigma(w))\,.
\end{align*}
Recalling the non-negative correlations between the spins, we deduce
that under the monotone coupling
\begin{align*} \E_{\eta, \tilde{\eta}}  \Big[\,\frac{1}{2}\sum_{v\in F} |Z_1(v) - \tilde{Z}_1(v)|\Big] &= 1 - \frac{1}{|F|} +
\frac{1}{2|F|}\sum_{u \in F \setminus \{v\}} \psi(u, \eta, \tilde{\eta})\\
&\leq 1 - \frac{1}{|F|} + \frac{1}{|F|}\sum_{u \in F \setminus
\{v\}} \sum_{w\in F\setminus \{u\}} \Cov(\sigma(u), \sigma(w))\,.
\end{align*}
By Lemma~\ref{lem-F}, we get that for $n\geq \mathrm{e}^4$,
\begin{align*}
\E_{\eta, \tilde{\eta}}  \Big[\,\frac{1}{2}\sum_{v\in F} |Z_1(v) - \tilde{Z}_1(v)|\Big]\leq 1 - \frac{1}{|F|} + \frac{2}{|F|\log n} \leq 1 -
\frac{1}{2|F|}\,.
\end{align*}
Using the triangle inequality and recursion, we conclude the proof.
\end{proof}

From the contraction result, we can   derive the uniform variance bound on $\cS_t$. This type of argument appeared in \cite{LLP} (see Lemma 2.4) when $(Z_t)$ is a one dimensional chain. The  argument naturally extends to multi-dimensional case and we include the proof for completeness.
\begin{lemma}
Let $(Z_t)$ and $(\tilde{Z_t})$ be two instances of a Markov chain
taking values in $\R^n$. Assume that for some $\rho < 1$ and all
initial states $z_0$ and $ \tilde{z}_0$, there exists a coupling
satisfying \[\E_{z_0, \tilde{z}_0}\big[\mbox{$\sum_{i}$} |Z_t(i) -
\tilde{Z}_t (i)|\big] \leq \rho^t \mbox{$\sum_i$}|z_0(i) -
\tilde{z}_0(i)|\,,\] where we used the convention that $z(i)$ stands
for the $i$-th coordinate of $z$ for $z\in \R^n$. Furthermore,
suppose that $\sum_i|Z_t(i) - Z_{t-1}(i)| \leq R$ for all $t$. Then
for any $t\in \N$ and starting state $z \in \R^n$,
\[\var_{z} \big(\mbox{$\sum_i$}Z_t(i)\big) \leq \frac{2}{1 - \rho^2} R^2\,.\]
\end{lemma}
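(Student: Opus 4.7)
The plan is to apply a Doob martingale decomposition to $W_t := \sum_{i=1}^{n} Z_t(i)$. Define $\phi_k(z') := \E_{z'}[W_k]$ for the chain started at $z'$, and set $M_s := \E_z[W_t \mid \mathcal{F}_s]$ with $\mathcal{F}_s = \sigma(Z_0,\ldots,Z_s)$. Since $M_0 = \E_z[W_t]$ and $M_t = W_t$, the orthogonality of martingale increments gives
\[ \var_z(W_t) = \sum_{s=1}^{t} \E_z\bigl[(M_s - M_{s-1})^2\bigr], \]
so the task reduces to bounding each squared increment geometrically in $t-s$.

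The first key step is to reformulate the coupling contraction hypothesis as a Lipschitz bound on $\phi_k$ with respect to the $\ell_1$ norm $\|z'\|_1 := \sum_i |z'(i)|$. For any $z_0, \tilde z_0 \in \R^n$, running the hypothesized coupling and using $|\sum_i x_i| \le \sum_i |x_i|$,
\[ |\phi_k(z_0) - \phi_k(\tilde z_0)| \le \E_{z_0,\tilde z_0}\sum_i \bigl|Z_k(i) - \tilde Z_k(i)\bigr| \le \rho^k \|z_0 - \tilde z_0\|_1. \]
By the Markov property $M_s = \phi_{t-s}(Z_s)$, so
\[ M_s - M_{s-1} = \phi_{t-s}(Z_s) - \E\bigl[\phi_{t-s}(Z_s) \bigm| Z_{s-1}\bigr]. \]

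The second step bounds this increment using the deterministic one-step jump hypothesis $\|Z_s - Z_{s-1}\|_1 \le R$. Combined with the Lipschitz estimate, this shows that, conditional on $Z_{s-1}$, the variable $\phi_{t-s}(Z_s)$ lies almost surely in an interval of length at most $2\rho^{t-s}R$ centered at $\phi_{t-s}(Z_{s-1})$. Since a random variable supported in an interval of length $L$ has variance at most $L^2/4$, I would conclude
\[ \E\bigl[(M_s - M_{s-1})^2 \bigm| Z_{s-1}\bigr] \le \rho^{2(t-s)} R^2, \]
and summing the resulting geometric series yields $\var_z(W_t) \le R^2/(1-\rho^2)$, which is in fact a factor of $2$ sharper than the stated bound. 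This slack means one could equally well use the cruder deterministic estimate $|M_s - M_{s-1}| \le 2\rho^{t-s}R$ (valid since both $\phi_{t-s}(Z_s)$ and its conditional mean lie in the same interval), at the cost of an extra constant, and still recover the claim. The only point requiring attention is recognizing that the per-coordinate form of the coupling hypothesis is precisely what delivers Lipschitz continuity of $\phi_k$ on $(\R^n,\|\cdot\|_1)$; once this is noted, the Doob martingale / bounded-increment argument runs mechanically with no genuine obstacle.
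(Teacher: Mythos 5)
Your proof is correct. The Doob-martingale decomposition you use is, after unrolling, the same variance decomposition the paper obtains via the total-variance formula and a one-step recursion on $\nu_t := \max_z \var_z(\sum_i Z_t(i))$: in both cases the heart of the matter is that the contraction hypothesis makes $\phi_k(\cdot) = \E_\cdot[\sum_i Z_k(i)]$ a $\rho^k$-Lipschitz function in $\ell_1$, and the bounded one-step jump then controls the increments of the Doob martingale (equivalently, the $\var_z(\E_z[\cdot\mid Z_1])$ term in the paper). The one genuine difference is the elementary inequality used on each increment: you apply Popoviciu's bound, that a random variable supported on an interval of length $L$ has variance at most $L^2/4$, with $L=2\rho^{t-s}R$, whereas the paper writes the variance as $\tfrac{1}{2}\E[(X-X')^2]$ for an independent copy $X'$ and bounds $|X-X'|\le 2\rho^{t-1}R$ pointwise, which costs a factor of $2$. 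That is why you end up with $R^2/(1-\rho^2)$ rather than the paper's $2R^2/(1-\rho^2)$; both suffice for the application, so the paper did not bother to optimize. Your remark that the cruder deterministic estimate $|M_s-M_{s-1}|\le 2\rho^{t-s}R$ would also do is accurate and is, in effect, what the paper's independent-copy step amounts to.
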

\begin{proof}
Let $Z_t$ and $Z'_t$ be two independent instances of the chain both
started from $z$. Defining $Q_t = \sum_{i}Z_t(i)$ and $Q'_t =
\sum_{i}Z'_t(i)$, we obtain that
\begin{align*}
\big|\E_{z}[Q_t\mid Z_1 = z_1] - \E_z[Q'_t |
Z'_t = z'_1]\big| &= \big|\E_{z_1}[Q_{t-1}] - \E_{z'_1} [Q'_{t-1}]\big|\\
&\leq\rho^{t-1}  \mbox{$\sum_i$}|z_1(i) - z'_1(i)| \leq
2\rho^{t-1}R\,,
\end{align*}
for all possible choices of $z_1$ and $z'_1$. It follows that for
any starting state $z$
\begin{align*}
\var_z(\E_z[Q_t\mid Z_1])=
\tfrac{1}{2}\E_z\big[\big(\E_{Z_1}[Q_{t-1}] -
\E_{Z'_1}[Q'_{t-1}]\big)^2\big]\leq 2(\rho^{t-1} R)^2.
\end{align*}
Therefore, by the total variance formula, we obtain that for all $z$
\begin{equation*}\var_z (Q_t)  = \var_z(\E_z[Q_t \mid Z_1]) + \E_z[\var_z(Q_t\mid
Z_1)]\leq 2(\rho^{t-1}R)^2 + \nu_{t-1}\,,
\end{equation*}
where $\nu_t \deq \max_z \var_z(Q_t)$. Thus $\nu_t \leq
2(\rho^{t-1}R)^2 + \nu_{t-1}$, whence
\[\nu_t \leq \sum_{i=1}^t (\nu_i - \nu_{i-1}) \leq \sum_{i=1}^{t} 2\rho^{2(t-1)}R^2 \leq \frac{2R^2}{1-\rho^2}\,,\]
completing the proof.
\end{proof}
Combining the above two lemmas gives the following variance bound (note that in our case $R=2$ and $\rho = 1 - \frac{1}{2|F|}$, so
$1-\rho^2  \ge \frac{1}{2|F|}$).
\begin{lemma}\label{lem-variance-bound}
For all $t$ and starting position $z$, we have  $\var_z (\cS_t)
\leq 16|F|$.
\end{lemma}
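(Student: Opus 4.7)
The plan is to apply the preceding abstract variance lemma as a black box to the chain $(Z_t)$, with parameters read off from the structure we have already established. The only real work is to verify the two hypotheses of that lemma in our setting, and the parenthetical remark immediately preceding the statement tells us exactly what the answers should be.

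First, I would verify the one-step increment bound. The chain $(Z_t)$ takes values in $\{-1,1\}^F \subset \R^{|F|}$, and by construction each transition of $(Z_t)$ corresponds to one step of the accelerated dynamics in which a single vertex $v \in F$ is selected and the block $\{v\}\cup F^c$ resampled; only the coordinate $v$ in $F$ can change. Hence at most one coordinate of $Z_t$ flips between times $t-1$ and $t$, contributing at most $2$ to the $\ell^1$ distance, so $\sum_{i \in F}|Z_t(i)-Z_{t-1}(i)| \leq 2$ and we may take $R=2$.

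Second, Lemma~\ref{lem-contraction} supplies precisely the coupling contraction required by the abstract lemma, with rate $\rho = 1 - \frac{1}{2|F|}$ (valid for $n\geq \mathrm{e}^4$, which may be assumed without loss of generality when proving the main theorem since the conclusion is asymptotic). Since $\rho \in (0,1)$, factoring yields
\[ 1-\rho^2 = (1-\rho)(1+\rho) \geq 1-\rho = \frac{1}{2|F|}. \]

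Finally, identifying $\cS_t = \sum_{v\in F} Z_t(v)$ with the sum-coordinate $Q_t$ of the abstract lemma, the conclusion gives
\[ \var_z(\cS_t) \leq \frac{2R^2}{1-\rho^2} \leq 2\cdot 4 \cdot 2|F| = 16|F|, \]
as claimed. I do not foresee any real obstacle here: this lemma is a bookkeeping consolidation of the two preceding results, and the only fact worth double-checking is that the abstract lemma's $\R^n$-valued hypothesis accommodates $\{-1,1\}^F$ without modification, which it clearly does.
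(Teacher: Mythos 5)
Your proof is correct and is exactly the argument the paper intends: the paper ``proves'' this lemma only through the parenthetical remark preceding it, which identifies $R=2$ and $\rho=1-\tfrac{1}{2|F|}$ and notes $1-\rho^2\geq\tfrac{1}{2|F|}$. You simply spell out the verification that only the coordinate $v\in F$ changes in a step of $(Z_t)$ (giving $R=2$) and that Lemma~\ref{lem-contraction} supplies the contraction hypothesis, then plug into the abstract bound $\tfrac{2R^2}{1-\rho^2}\leq 16|F|$.
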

We can now derive a lower bound on the mixing time for the chain
$(Z_t)$.
\begin{lemma}\label{lem-mixing-Z}
The chain $(Z_t)$ has a mixing time
$\tmix^+ \geq \frac{1}{2} |F| \log |F| - 20|F|$.
\end{lemma}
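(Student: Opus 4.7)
The plan is to use the sum-of-spins statistic $\cS_t$ to distinguish $\P_+(Z_t \in \cdot)$ from $\nu_F$ at time $t^\ast := \lfloor \tfrac{1}{2} k\log k - 20 k\rfloor$, where $k := |F|$. Since the target inequality is vacuous for small $k$, I may restrict to $k$ sufficiently large.

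First, I would combine Lemma~\ref{lem-expectation-bound} with the elementary inequality $\log(1-1/k) \geq -1/k - O(1/k^2)$ to obtain $(1-1/k)^{t^\ast} \geq (1-o(1))\, e^{20} k^{-1/2}$, and hence $\E_{+}(\cS_{t^\ast}) \geq \tfrac{1}{2} e^{20}\sqrt{k}$. On the stationary side, the $\sigma \mapsto -\sigma$ symmetry of (\ref{gibbs}) gives $\E_{\nu_F}[\cS] = 0$. For variances, Lemma~\ref{lem-variance-bound} gives $\var_{+}(\cS_{t^\ast}) \leq 16 k$, while $\var_{\nu_F}(\cS) = \sum_{u,v\in F} \Cov_\mu(\sigma(u),\sigma(v)) \leq k + 2/\log n$ follows directly from Lemma~\ref{lem-F}.

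Next, I would apply Chebyshev's inequality to the set $A := \{\sigma \in \{-1,1\}^F : \cS(\sigma) \geq \tfrac{1}{4} e^{20}\sqrt{k}\}$. On the plus side, the $\tfrac{1}{4}e^{20}\sqrt{k}$ gap between $\E_+(\cS_{t^\ast})$ and the threshold, combined with standard deviation $\leq 4\sqrt{k}$, yields $\P_{+}(Z_{t^\ast}\notin A) \leq 256\,e^{-40}$; under $\nu_F$ the same computation gives $\nu_F(A) \leq 32\,e^{-40}$. The total variation distance at time $t^\ast$ therefore far exceeds $1/4$, so $\tmix^+ > t^\ast$. No step looks delicate: the $-20 k$ slack in the statement is precisely engineered so that the constant $e^{20}$ dwarfs every subsequent loss, and only routine bookkeeping is required.
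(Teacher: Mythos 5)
Your proof is correct and follows essentially the same route as the paper: use $\cS_t$ as a distinguishing statistic, lower-bound its mean at time $\approx\tfrac12 k\log k - 20k$ via Lemma~\ref{lem-expectation-bound}, upper-bound the variance via Lemma~\ref{lem-variance-bound}, and separate the two distributions by Chebyshev. The only (minor) departure is how you bound the stationary variance: you compute $\var_{\nu_F}(\cS)=\sum_{u,v\in F}\Cov_\mu(\sigma(u),\sigma(v))\leq k+2/\log n$ directly from Lemma~\ref{lem-F}, whereas the paper lets $t\to\infty$ in Lemma~\ref{lem-variance-bound} to get the cruder bound $16|F|$; your bound is tighter but buys nothing here since either is dwarfed by the $e^{40}k$ in the Chebyshev denominator.
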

\begin{proof}
Let $(Z_t^{(+)})$ be an instance of the dynamics $(Z_t)$
started from the all-plus configuration and let $Z^* \in \{-1, 1\}^F$ be distributed as $\nu_F$.
Write \[T_0 = \tfrac{1}{2}|F|\log |F| - 20|F|\,.\] It suffices to
prove that
\begin{equation}\label{eq-S-t-S}
d_{\mathrm{TV}}(\cS_{T_0}^{(+)}, \cS^*) \geq \tfrac{1}{4}\,,
\end{equation}
where $\cS_{T_0}^{(+)} = \sum_{v\in F}Z_{T_0}^{(+)}(v)$ as before and $\cS^* =
\sum_{v\in F}Z^*(v)$ be the sum of spins in stationary distribution.
 To this end, notice that by Lemmas
\ref{lem-expectation-bound} and \ref{lem-variance-bound}:
\[\E_+ (\cS_{T_0}^{(+)}) \geq \mathrm{e}^{20+o(1)}\sqrt{|F|}\, \mbox{ and } \var_+(\cS_{T_0}^{(+)}) \leq 16|F|\,.\]
An application of Chebyshev's inequality gives that for large enough $n$
\begin{equation}\label{eq-dist-S-T}
\P_+(\cS_{T_0}^{(+)} \leq \mathrm{e}^{10} \sqrt{|F|}) \leq
\frac{16|F|}{(\mathrm{e}^{20+o(1)}-\mathrm{e}^{10})\sqrt{|F|})^2}
\leq \frac{1}{4}\,.
\end{equation}
On the other hand, it is clear by symmetry that $\E_{\nu_F}\cS^* =
0$. Moreover, since Lemma \ref{lem-variance-bound} holds for all
$t$, taking $t\to\infty$ gives that $\var_{\nu_F}\cS^* \leq 16|F|$.
Applying Chebyshev's inequality again, we deduce that
\[\P_{\nu_F}(\cS^* \geq \mathrm{e}^{10} \sqrt{|F|}) \leq \frac{16|F|}{(\mathrm{e}^{10}\sqrt{|F|})^2}\leq \frac{1}{4} \,.\]
Combining the above inequality with \eqref{eq-dist-S-T} and the fact
that
\[d_{\mathrm{TV}}(\cS_{T_0}^{(+)}, \cS^*) \geq 1 - \P_+(\cS_{T_0}^{(+)} \leq \mathrm{e}^{10} \sqrt{|F|}) - \P_\mu(\cS^* \geq \mathrm{e}^{10} \sqrt{|F|})\,,\]
we conclude that \eqref{eq-dist-S-T} indeed holds (with room to spare), as required.
\end{proof}

We are now ready to derive Theorem \ref{mainthm-1}. Observe that the
dynamics $(Y_t)$ is a lazy version of the dynamics $(Z_t)$. Consider
an instance $(Y_t^{+})$ of the dynamics $(Y_t)$ started from the
all-plus configuration and let $Y^* \in \{-1, 1\}^F$ be distributed according to  the stationary
distribution $\nu_F$. Let $\cS_t^{(+)}$ and $\cS^*$ again be the sum
of spins over $F$, but with respect to the chain $(Y_t^{(+)})$ and
the variable $Y^*$ respectively.  Write
\[T = \frac{n}{|F|}\Big(\frac{1}{2}|F| \log |F| - 40|F|\Big)\,,\]
and let $N_T$ be the number of steps in $[1,T]$
where a block of the form $\{v\}\cup F$ is selected to update
in the chain $(Y_t^{(+)})$.
By Chebyshev's inequality,
\[\P(N_T \geq \tfrac{1}{2}|F| \log |F| - 20|F|) \leq \frac{T|F|/n}{(20|F|)^2} = o(1)\,.\]
Repeating the arguments in the proof of Lemma~\ref{lem-mixing-Z}, we   deduce that for all $t \leq  T_0=\frac{1}{2}|F| \log |F| - 20 |F|$, we have
\[\P_+(\cS_t^{(+)} \leq \mathrm{e}^{10} \sqrt{|F|}) \leq \tfrac{1}{4}\,.\]
Therefore
\begin{align*}\|\P_+(Y_T^{(+)} \in \cdot) - \nu_F\|_{\mathrm{TV}} &\geq 1 - \P(N_T \geq T_0) - \P_{\mu_Y}(\cS^* \geq \mathrm{e}^{10}\sqrt{|F|})\\
&\quad-\P_+\big(\cS_T^{(+)} \leq \mathrm{e}^{10}\sqrt{|F|}\mid N_T
\leq T_0\big)\,.
\end{align*}
 Altogether, we have that
\[\|\P_+(Y_T^{(+)} \in \cdot) - \nu_{F}\|_{\mathrm{TV}} \geq \tfrac{1}{2}+o(1)\geq \tfrac{1}{4}\,,\]
and hence that
\[\tmix^{+, Y}
\geq T \geq \tfrac{1+o(1)}{4} n \log n\,,\] where $\tmix^{+, Y}$
refers to the mixing time for chain $(Y_t^{(+)})$. Since the chain
$(Y_t)$ is a projection of the chain $(X_t)$, it follows
that the mixing time for the chain $(X_t)$ satisfies   $\tmix^{+,
X} \geq (1/4+o(1)) n\log n$. Combining this bound with Theorem
\ref{thm-PW-speed-up} (see the discussion following the statement of
the theorem), we conclude that the Glauber dynamics started with the
all-plus configuration has   mixing time $\tmix^+\geq (1/4+o(1)) n
\log n$. \qed
\begin{remark*}
The analysis naturally extends to the continuous-time Glauber
dynamics, where each site is associated with an independent Poisson
clock of unit rate determining the update times of this site as
above (note that the continuous dynamics is $|V|$ times faster than
the discrete dynamics). We can use similar arguments to these used above to handel the laziness in the transition from the chain $(Z_t)$ to the
chain $(Y_t)$. Namely, we could condition on the number of updates
up to time $t$ and then repeat the above arguments to establish that
$\tmix^+ \geq (1/4+o(1))\log n$ in the continuous-time case.
\end{remark*}

\begin{remark*}
We believe that Theorem \ref{mainthm-1} should have analogues (with $\tmix$ in place of $\tmix^+$) for the Ising model with arbitrary magnetic field, as well as for the Potts model and proper colorings. The first of these may be accessible to the methods of this paper, but the other two models need
new ideas.
\end{remark*}

\section*{Acknowledgments}
We thank Allan Sly and Asaf Nachmias for helpful comments.

\section*{Appendix: A simple proof for an almost optimal lower bound}

\noindent The current section is added on September 24, 2013.

\bigskip

We record here a simple proof found recently which gives that $$\inf_{G, J}t_{\mathrm{mix}}^+(G, J) \geq  n \log n/2- 3n \log\log n\,.$$

First of all, we can assume the spectral gap is larger than $1/(n\log n)$,
otherwise the lower bound holds since the mixing time is larger than the
inverse of the gap. Then we take two instances of Glauber dynamics where
one is started from all-plus configuration and one is from stationary
distribution $\mu$, and consider the monotone coupling between the two
chains. Let $\mathcal A$ be the random subset which has been updated by time
$t_n =  n \log n/2 - 3n \log\log n$ in the dynamics (same for both chains). It is clear that for any fixed
subset $A$, the distribution of the stationary chain at time $t_n$ remains
stationary under the conditioning $\mathcal A = A$. Denote by $S_A$ the sum of
spins over set $A$, and by $S$ the sum of spins over the whole graph. By
Dirichlet form, we get that $\var_\mu(S_A)\leq n \log n$ for any fixed subset $A$. Therefore, we see
$$\P_\mu(S_A \leq -10 \sqrt{n\log n}\mid \mathcal A = A) \leq 1/10$$ for any
fixed subset $A$. Averaging over the random set $\mathcal A$, we get that
$$\P_\mu(S_{\mathcal A} \leq -10 \sqrt{n\log n}) \leq 1/10\,.$$
Using the monotone coupling, we deduce that $$\P_+(S_{\mathcal A}
\leq -10 \sqrt{n \log n}) \leq 1/10\,.$$ In addition, an easy coupon
collecting argument gives that $$\P(|\mathcal A^c| \leq \sqrt{n}( \log n)^2)
\leq 1/10\,.$$ Altogether, we see that $\P_+(S\leq \sqrt{n} (\log n)^2/2) \leq
1/5$. Combined with the fact that $\var_\mu(S) \leq n\log n$, it follows that
the total variation distance between the all-plus chain at time $t_n$ and
the stationary distribution is at least 3/5, completing the proof of the
claim.

\begin{bibdiv}
\begin{biblist}
\bib{Aldous}{article}{
   author={Aldous, David},
   title={Random walks on finite groups and rapidly mixing Markov chains},
   conference={
      title={Seminar on probability, XVII},
   },
   book={
      series={Lecture Notes in Math.},
      volume={986},
      publisher={Springer},
      place={Berlin},
   },
   date={1983},
   pages={243--297},
}

\bib{AF}{book}{
    AUTHOR = {Aldous, David},
    AUTHOR = {Fill, James Allen},
    TITLE =  {Reversible {M}arkov Chains and Random Walks on Graphs},
    note = {In preparation, \texttt{http://www.stat.berkeley.edu/\~{}aldous/RWG/book.html}},
}


%

%

\bib{EM}{article}{
   author={Ellis, Richard S.},
   author={Monroe, James L.},
   title={A simple proof of the {\rm GHS}\ and further inequalities},
   journal={Comm. Math. Phys.},
   volume={41},
   date={1975},
   pages={33--38},
}
\bib{GHM}{article}{
   author={Georgii, Hans-Otto},
   author={H{\"a}ggstr{\"o}m, Olle},
   author={Maes, Christian},
   title={The random geometry of equilibrium phases},
   conference={
      title={Phase transitions and critical phenomena, Vol. 18},
   },
   book={
      series={Phase Transit. Crit. Phenom.},
      volume={18},
      publisher={Academic Press},
      place={San Diego, CA},
   },
   date={2001},
   pages={1--142},
}

\bib{GHS}{article}{
   author={Griffiths, Robert B.},
   author={Hurst, C. A.},
   author={Sherman, S.},
   title={Concavity of magnetization of an Ising ferromagnet in a positive
   external field},
   journal={J. Mathematical Phys.},
   volume={11},
   date={1970},
   pages={790--795},
}

\bib{HS}{article}{
   author={Hayes, Thomas P.},
   author={Sinclair, Alistair},
   title={A general lower bound for mixing of single-site dynamics on
   graphs},
   journal={Ann. Appl. Probab.},
   volume={17},
   date={2007},
   number={3},
   pages={931--952},
  note = {Preliminary version appeared in Proceedings of IEEE FOCS 2005, pp.
511-520.},
}

\bib{Lebowitz}{article}{
   author={Lebowitz, Joel L.},
   title={GHS and other inequalities},
   journal={Comm. Math. Phys.},
   volume={35},
   date={1974},
   pages={87--92},
}

\bib{LLP}{article}{
  journal = {Probability Theory and Related Fields},
  volume  = {},
  pages   = {},
  year    = {},
  title   = {Glauber dynamics for the Mean-field Ising Model: cut-off, critical power law, and metastability},
  author  = {Levin, David A.},
  author = {Luczak, Malwina},
  author = {Peres, Yuval},
  status = {to appear},
}

\bib{LPW}{book}{
   author={Levin, David A.},
   author={Peres, Yuval},
   author={Wilmer, Elizabeth L.},
   title={Markov chains and mixing times},
   note={With a chapter by James G. Propp and David B. Wilson},
   publisher={American Mathematical Society},
   place={Providence, RI},
   date={2009},
   pages={xviii+371},
}

\bib{Martinelli}{article}{
   author={Martinelli, Fabio},
   title={Lectures on Glauber dynamics for discrete spin models},
   conference={
      title={Lectures on probability theory and statistics},
      address={Saint-Flour},
      date={1997},
   },
   book={
      series={Lecture Notes in Math.},
      volume={1717},
      publisher={Springer},
      place={Berlin},
   },
   date={1999},
   pages={93--191},
}

\bib{Nacu}{article}{
  journal = {Probability Theory and Related Fields},
  volume  = {127},
  pages   = {177-185},
  year    = {2003},
  title   = {Glauber Dynamics on the Cycle is Monotone},
  author  = {Nacu, \c{S}.},
}

\bib{Peres}{article}{
    AUTHOR = {Peres, Yuval},
    conference = {
        title = {Lectures on ``Mixing for Markov Chains and Spin Systems''},
        address = {University of British Columbia},
        date = {August 2005},
        },
    note ={Summary available at \texttt{http://www.stat.berkeley.edu/\~{}peres/ubc.pdf}},
}

\bib{PW}{article}{
  author = {Peres, Yuval},
  author = {Winkler, Peter},
  title = {Can extra updates delay mixing?},
  note = {In preparation.},
}

\bib{Sinclair}{book}{
   author={Sinclair, Alistair},
   title={Algorithms for random generation and counting},
   series={Progress in Theoretical Computer Science},
   note={A Markov chain approach},
   publisher={Birkh\"auser Boston Inc.},
   place={Boston, MA},
   date={1993},
   pages={vi+146},
   isbn={0-8176-3658-7},
   review={\MR{1201590 (93j:65011)}},
}

\end{biblist}
\end{bibdiv}

\end{document}